\theoremstyle {plain}
\newtheorem {thm}{Theorem}[section]
\newtheorem {prop}[thm]{Proposition}
\theoremstyle {definition}
\newtheorem {defn}[thm]{Definition}
\theoremstyle {remark}
\newtheorem {rem}[thm]{Remark}
\newtheorem {nota}[thm]{Notation}
\newtheorem {exmp}[thm]{Example}
\newtheorem {conv}[thm]{Convention}
\DeclareMathOperator{\Aut}{Aut}
\DeclareMathOperator{\Mon}{Mon}
\DeclareMathOperator{\LM}{LM}
\DeclareMathOperator{\LC}{LC}
\DeclareMathOperator{\LT}{LT}
\DeclareMathOperator{\tail}{tail}
\DeclareMathOperator{\id}{id}
\DeclareMathOperator{\sign}{sign}
\DeclareMathOperator{\ord}{ord}
\DeclareMathOperator{\ch}{char}
\DeclareMathOperator{\Sym}{Sym}
\DeclareMathOperator{\Mat}{Mat}
\DeclareMathOperator{\lcm}{lcm}
\DeclareMathOperator{\spoly}{spoly}
\DeclareMathOperator{\NF}{NF}
\newcommand{\A}{{\mathbb A}}
\newcommand{\C}{{\mathbb C}}
\newcommand{\F}{{\mathbb F}}
\newcommand{\N}{{\mathbb N}}
\newcommand{\Q}{{\mathbb Q}}
\newcommand{\sym}{{\mathds S}}
\newcommand{\gen}[1]{\left\langle #1 \right\rangle}
\newcommand{\set}[2]{\left\{{#1},\ldots,{#2}\right\}}
\newcommand{\singular}{{\sc Singular }}
\renewcommand{\matrix}[2]{\left(\begin{array}{#1}#2\end{array}\right)}
\begin{document}

\bibliographystyle{alpha}

\title{Gr\"obner bases of symmetric ideals}

\author{Stefan Steidel}
\address{Stefan Steidel\\ Department of Mathematical Methods in Dynamics and Durability\\
 Fraunhofer Institute for Industrial Mathematics ITWM\\ 
 Fraunhofer-Platz 1\\ 67663 Kaiserslautern\\ Germany}
\email{stefan.steidel@itwm.fraunhofer.de} 

\keywords{Gr\"obner bases, symmetry, modular computation, parallel computation}

\date{\today}

\maketitle

\begin{abstract}
In this article we present two new algorithms to compute the Gr\"obner basis of an ideal that is invariant 
under certain permutations of the ring variables and which are both implemented in \textsc{Singular} 
(cf.\ \cite{DGPS12}). 
The first and major algorithm is most performant over finite fields whereas the second algorithm is a 
probabilistic modification of the modular computation of Gr\"obner bases based on the articles by Arnold
(cf.\ \cite{A03}), Idrees, Pfister, Steidel (cf.\ \cite{IPS11}) and Noro, Yokoyama (cf.\ \cite{NY12}, \cite{Y12}). 
In fact, the first algorithm that mainly uses the given symmetry, improves the necessary modular calculations 
in positive characteristic in the second algorithm. Particularly, we could, for the first time even though 
probabilistic, compute the Gr\"obner basis of the famous ideal of \emph{cyclic $9$-roots} (cf.\ \cite{BF91}) 
over the rationals with \textsc{Singular}. 
\end{abstract}


\section{Introduction}
 
Computing the Gr\"obner basis of an ideal is a powerful tool in commutative algebra, with applications in 
algebraic geometry and singularity theory. The first general algorithm was proposed by Buchberger in 
1965 (cf.\ \cite{Bu65}). 

There are previous works by Aschenbrenner and Hillar on symmetric Gr\"obner bases in infinite-dimensional
rings (cf.\ \cite{AH07}, \cite{AH08}) and Faug\`ere and Rahmany using SAGBI-Gr\"obner bases for solving systems of
polynomial equations with symmetries (cf.\ \cite{FR09}).

Within this article we improve the computation of Gr\"obner bases in case that the input ideal has some 
special symmetry-character.
Consider, for example, the ideal $I=\gen{x^2y^2 - z,xy - 2y + 3z,xy - 2x + 3z} \subseteq \Q[x,y,z]$. 
Then $I$ does not vary if one interchanges the variables $x$ and $y$, and we say that $I$ is 
\emph{symmetric} with respect to the permutation $x \longleftrightarrow y$. 
In the following we use this property to manipulate the ideal by an appropriate linear transformation and 
apply Buchberger's algorithm subsequently.

We start in Section \ref{secNotDef} by presenting some basic notations and definitions. 
In Section \ref{secSymmGB} we introduce the symmetric Gr\"obner basis algorithm, and state a theoretical 
result that justifies the impact of the symmetry in Proposition \ref{propPropertyGB}.  
Section \ref{secSymmGBQ} combines the symmetric algorithm of Section \ref{secSymmGB} with modular 
methods which results in an probabilistic but performant algorithm over the rationals.
Moreover, examples and timings are provided in Section \ref{subsecSymmGBExTime} and Section 
\ref{subsecSymmGBQExTime}, respectively.

\section{Basic notations and definitions} \label{secNotDef}

Let $\sigma \in \sym_n := \Sym(\set 1n)$ be a permutation. The \emph{order} of $\sigma$ is the minimal 
natural number $k \in \N_{>0}$ such that $\sigma^k = \id$, in particular $\ord(\sigma):=\#(\gen \sigma) < 
\infty$.
In order to describe $\sigma$ properly, we make use of the following well-known result concerning the 
representation of permutations.

\begin{defn} \label{defCycDecomp}
Let $\sigma \in \sym_n$ be a permutation. Then there exists a natural number $\vartheta(\sigma)$ and 
a finite disjoint partition $\set 1n = \coprod_{i=1}^{\vartheta(\sigma)} \set{e_{i,1}}{e_{i,l_i}}$ such that 
$$\sigma = (e_{1,1} \ldots e_{1,l_1})\cdots (e_{\vartheta(\sigma),1}\ldots e_{\vartheta(\sigma),
l_{\vartheta(\sigma)}})$$ with $l_1 + \ldots + l_{\vartheta(\sigma)} = n$ and $0 \leq l_i \leq n$ for all 
$i \in \set{1}{\vartheta(\sigma)}$. 
The cycles $(e_{i,1} \ldots e_{i,l_i})$ are up to alignment uniquely determined, and we call this 
representation the \emph{cycle decomposition} of $\sigma$.
The tuple $(l_1,\ldots,l_{\vartheta(\sigma)})$ is called the \emph{cycle type} of $\sigma$ if $l_1 \leq 
\ldots \leq l_{\vartheta(\sigma)}$. 
\end{defn}

Note that having the cycle decomposition of a permutation $\sigma$ it holds $\ord(\sigma) = 
\lcm(l_1,\ldots,l_{\vartheta(\sigma)})$. From now on we assume that all considered permutations 
$\sigma \in \sym_n$ are given in cycle decomposition.


Now let $K$ be a field and $X := \set{x_1}{x_n}$ be a set of indeterminates, 
then $\sigma \in \sym_n$ induces a canonical automorphism on the polynomial ring over $K$ in
these indeterminates, $K[X]$, via $\varphi_\sigma: K[X] \longrightarrow K[X], \, x_i \longmapsto      
x_{\sigma(i)}$.
By abuse of notation we always write $\sigma$ instead of $\varphi_\sigma$, i.e.\ we identify 
the group $\sym_n$ as a subgroup of the automorphism group $\Aut(K[X])$.

\begin{defn}
Let $I \subseteq K[X]$ be an ideal and $\sigma \in \Aut(K[X])$ be an automorphism.
Then $I$ is called \emph{$\sigma$-symmetric} if $\sigma(I)=I$. Moreover, let $\mathcal S \subseteq 
\Aut(K[X])$ be a subgroup then we call $I$ \emph{$\mathcal S$-symmetric} if it is $\sigma$-symmetric 
for all $\sigma \in \mathcal S$. 
\end{defn}

Every subgroup of $\sym_n$ has only finitely many elements and is therefore finitely generated. 
Hence, let $\mathcal S = \gen{\sigma_1,\ldots,\sigma_m} \subseteq \sym_n$ then an ideal $I \subseteq 
K[X]$ is $\mathcal S$-symmetric if and only if it is $\sigma_i$-symmetric for all $i \in \set 1m$. In particular, 
if an ideal is $\sigma$-symmetric then it is $\gen \sigma$-symmetric.


Moreover, given an ideal $I \subseteq K[X]$ we can always choose a finite set of polynomials $F_I =
\{f_1,\ldots,f_r\}$ such that $I = \gen{F_I}$.
Thus, if $I$ is $\sigma$-symmetric with $\sigma \in \Aut(K[X])$ we even may assume that $\sigma(F_I)
=F_I$ by possibly adding some polynomials to $F_I$.

\begin{exmp} \label{exmpMain0}
The ideal $I = \gen{x^2y^2 - z,\; xy - 2y + 3z,\; xy - 2x + 3z} \subseteq \Q[x,y,z]$ is obviously 
$\sigma$-symmetric for $\sigma = (12)(3) \in \sym_3$.
\end{exmp}

We denote by $\Mon(X)$ the set of monomials. Moreover, if $>$ is a monomial ordering and $f \in K[X]$ 
a polynomial, then we denote by $\LC(f)$ the leading coefficient of $f$, by $\LM(f)$ the leading monomial 
of $f$, by $\LT(f)$ the leading term (leading monomial with leading coefficient) of $f$, and by $\tail(f) = 
f-\LT(f)$ the tail of $f$ with respect to the ordering $>$. In particular, with our notation it holds $\LT(f) = \LC(f) 
\cdot \LM(f)$.

\begin{conv}
In the following $>$ is a degree ordering, and we always consider reduced Gr\"obner bases $G$,
that is $0 \notin G$, $\LM(g) \nmid \LM(f)$ for any two elements $f \neq g$ in $G$, and $\LC(g) = 1$ 
respectively no monomial of $\tail(g)$ is contained in the leading ideal of $G$ for any $g \in G$.
\end{conv}

\section{Gr\"obner bases using symmetry} \label{secSymmGB}

Within this section we describe how to achieve an improvement of the Gr\"obner basis computation 
of a $\sigma$-symmetric ideal $I \subseteq K[X]$ by using its symmetric property. The basic idea is the 
construction and usage of an appropriate linear transformation $\tau \in \Aut(K[X])$ which ``diagonalises''
$\sigma$ and respects the $\sigma$-symmetry of $I$. 
It turns out that in many cases the usual Gr\"obner basis computation on the 
transformed side is much faster than the computation on the original side. 
Since the pull back of this Gr\"obner basis is in general not a Gr\"obner basis anymore we have to add 
another Gr\"obner basis computation. Nevertheless, this indirection effects an enormous speed up 
compared to the usual Gr\"obner basis algorithm (cf.\ Section \ref{subsecSymmGBExTime}).

We assume that the tuple $(\sigma,K)$ with $\ord(\sigma) = k \in \N$ always 
satisfies $\ch(K) \nmid k$ and $K$ has a $k$-th primitive root of unity $\xi_k$.

\begin{rem} \label{remFieldExtension}
We can always achieve this assumption by possibly adjoining $\xi_k$. In particular, we can swap to $K[\xi_k]$ 
by working over the field $K[a]/\Phi_k(a)$ where $\Phi_k(a)$ is the $k$-th cyclotomic polynomial.
\end{rem}

\subsection{The symmetric Gr\"obner basis algorithm} \label{subsecSymmGBAlg}

We start by illuminating the basis for the symmetric Gr\"obner basis algorithm from a character theoretical point 
of view and in terms of linear algebra. 

Therefore, we consider the $n$-dimensional $K$-subvector space $V = \gen{x_1,\ldots,x_n}_K$ of the 
infinite-dimensional $K$-vector space $K[X]$. Then due to our assumption that $\ch(K) \nmid \#(\gen \sigma)$, 
character theory guarantees that every representation of $\gen \sigma \subseteq \sym_n$ is a direct sum of 
irreducible representations (cf.\ \cite[Theorem 2]{Se96}), and all irreducible representations of $\gen \sigma 
\subseteq \sym_n$ have degree $1$ since $\gen \sigma \subseteq \sym_n$ is an abelian group 
(cf.\ \cite[Theorem 9]{Se96}). In particular, the representation $\rho: \gen \sigma \longrightarrow \Aut(V)$ is 
diagonalisable, i.e.\ $V = \bigoplus_{i=1}^n V_i$ with $V_i = \gen{y_i}_K$ and $\rho(\sigma)(y_i) = \xi_k^{\nu_i} 
\cdot y_i$ for some $0 \leq \nu_i \leq k-1$.

In terms of linear algebra we have the following quite simple proposition which, together with its proof, forms 
the basis of the symmetric Gr\"obner basis algorithm.

\begin{prop} \label{propAut}
Let $\sigma \in \sym_n$ have cycle type $(l_1,\ldots,l_{\vartheta(\sigma)})$. Then $\sigma \in \Aut(V)$ is 
diagonalisable with eigenvalues $\{\xi_{k}^{kj/l_m} \mid 1 \leq m \leq \vartheta(\sigma),\, 0 \leq j \leq l_m-1\}$. 
\end{prop}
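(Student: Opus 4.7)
The plan is to reduce to the single-cycle case by exploiting the disjointness of the cycle decomposition, then diagonalise a cyclic permutation matrix explicitly.

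First I would observe that each cycle of $\sigma$ gives rise to a $\sigma$-invariant subspace of $V$. More precisely, writing the cycle decomposition of $\sigma$ as in Definition \ref{defCycDecomp}, set
\[
V^{(m)} := \gen{x_{e_{m,1}},\ldots,x_{e_{m,l_m}}}_K \qquad \text{for } m=1,\ldots,\vartheta(\sigma).
\]
Because the cycles partition $\set 1n$, we obtain the $\sigma$-stable decomposition $V = \bigoplus_{m=1}^{\vartheta(\sigma)} V^{(m)}$, and it suffices to show that each $\sigma|_{V^{(m)}}$ is diagonalisable with the claimed eigenvalues.

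Next I would handle the single cycle. With respect to the ordered basis $x_{e_{m,1}},\ldots,x_{e_{m,l_m}}$, the endomorphism $\sigma|_{V^{(m)}}$ is represented by the cyclic shift matrix, whose characteristic polynomial is $T^{l_m}-1$. Since $\ord(\sigma)=k=\lcm(l_1,\ldots,l_{\vartheta(\sigma)})$, we have $l_m \mid k$, so the element $\zeta_m := \xi_k^{k/l_m}$ is a primitive $l_m$-th root of unity in $K$. The hypothesis $\ch(K)\nmid k$ forces $\ch(K)\nmid l_m$, hence $T^{l_m}-1$ is separable and splits over $K$ as $\prod_{j=0}^{l_m-1}(T-\zeta_m^j)$. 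Thus $\sigma|_{V^{(m)}}$ has $l_m$ pairwise distinct eigenvalues $\xi_k^{kj/l_m}$, $0\le j\le l_m-1$, and is therefore diagonalisable.

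Finally I would assemble the global statement: the direct sum of diagonalisable operators is diagonalisable, and its eigenvalue multiset is the union of the eigenvalue multisets of the summands. This yields exactly the set $\{\xi_k^{kj/l_m} \mid 1\le m\le \vartheta(\sigma),\ 0\le j\le l_m-1\}$ claimed in the proposition. I do not expect a genuine obstacle here; the only point that requires a moment's care is verifying that $\xi_k^{k/l_m}$ is actually a primitive $l_m$-th root of unity (rather than just some $l_m$-th root), which is where the divisibility $l_m\mid k$ coming from the lcm formula for $\ord(\sigma)$ is needed.
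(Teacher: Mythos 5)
Your proof is correct, and its overall skeleton matches the paper's: both reduce to a single cycle via the $\sigma$-stable decomposition $V=\bigoplus_m V^{(m)}$ coming from the disjointness of the cycles, and both use $l_m\mid k$ to see that $\xi_k^{k/l_m}$ is a primitive $l_m$-th root of unity. The difference lies in how the single-cycle case is settled. You argue abstractly: the characteristic polynomial $T^{l_m}-1$ is separable (since $\ch(K)\nmid l_m$) and splits over $K$, so the restriction has $l_m$ distinct eigenvalues and is automatically diagonalisable. The paper instead writes down an explicit eigenbasis, the discrete-Fourier-type vectors $y_{e_{m,i}}=\sum_{j=1}^{l_m}\xi_{l_m}^{(i-1)(j-1)}\,x_{\sigma_m^{j-1}(e_{m,i})}$, and verifies $\sigma(y_{e_{m,i}})=\xi_{l_m}^{i-1}y_{e_{m,i}}$ directly. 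Your route is shorter and cleaner as a proof of the proposition as stated; the paper's route is deliberately constructive, because the eigenvectors themselves are the whole point downstream: Remark \ref{remAut} defines the linear transformation $\tau$ by sending each variable to the corresponding $y_{e_{m,i}}$, and the algorithm needs that explicit formula. So if you only want the proposition, your argument suffices; if you want to feed the result into Algorithm \ref{algSymmStd}, you still have to exhibit the eigenbasis, which is exactly the extra work the paper's proof does.
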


\begin{proof}
Let $\sigma = (e_1 \ldots e_n) \in \sym_n$ with $\{e_1,\ldots,e_n\} = \{1,\ldots,n\}$ be an $n$-cycle.
The columns of the representation matrix $M(\sigma,X)$ of $\sigma \in \Aut(V)$ with respect to the 
$K$-basis $X=\{x_1,\ldots,x_n\}$ of $V$ are just the permuted unit vectors of $K^n$. Hence, $M(\sigma,X)$ 
is a unitarian matrix and in particular diagonalisable.
Moreover, let $C = t\mathds 1_n-M(\sigma,X) \in \Mat(n\times n,K[t])$ then the characteristic polynomial 
of $\sigma \in \Aut(V)$ is 
\begin{align*}
\chi_\sigma = \det(C) & = \sum_{\pi \in \sym_n} \sign(\pi) \cdot c_{1\pi(1)} \cdots c_{n\pi(n)} \\ & = t^n + 
\sign(\sigma) \cdot (-1)^n = t^n +(-1)^{n-1} \cdot (-1)^n = t^n -1,
\end{align*}
and $\{1,\xi_n,\xi_n^2,\ldots,\xi_n^{n-1}\}$ are exactly the eigenvalues of $\sigma$. Now, consider the 
combinatorial set $$Y := \left\{y_{e_i} = \sum_{j=1}^n \xi_n^{(i-1)(j-1)} \cdot x_{\sigma^{j-1}(e_i)} \;\Bigg|\; 
1 \leq i \leq n\right\}.$$ 
Note that $Y$ is a $K$-basis of $V$ since the coefficients of each $y_{e_i}$ are just different powers of 
the primitive root of unity $\xi_n$, and $\{x_{\sigma^{j-1}(e_i)} \mid 1 \leq j \leq n\} = X$ is a $K$-basis of 
$V$ itself.

Let $i \in \{1,\ldots,n\}$, then we easily compute that $\sigma(y_{e_i}) = \xi_n^{i-1} \cdot y_{e_i}$.
Consequently, $y_i$ is the eigenvector corresponding to the eigenvalue $\xi_n^{i-1}$, and the representation 
matrix $M(\sigma,Y)$ of $\sigma \in \Aut(V)$ with respect to $Y$ is diagonal.

Now, let $\sigma \in \sym_n$ have cycle type $(l_1,\ldots,l_{\vartheta(\sigma)})$ with cycle decomposition $\sigma = 
\sigma_1 \cdots \sigma_{\vartheta(\sigma)}$ and $\sigma_m = (e_{m,1} \ldots e_{m,l_m})$ for $1\leq m \leq \vartheta(\sigma)$.
Then we have $\ord(\sigma_m) = l_m$, $\ord(\sigma) = k = \lcm(l_1,\ldots,l_{\vartheta(\sigma)})$, and $$\xi_{l_m} = 
\xi_k^{k/l_m} \in K$$ is an $l_m$-th primitive root of unity. We set $X_m = \{x_{e_{m,1}},\ldots,x_{e_{m,l_m}}\}$ such that 
$\sigma_m \in \Aut(V_m)$ with $V_m = \gen{X_m}_K$, and $X = X_1 \cup \ldots \cup X_{\vartheta(\sigma)}$ is a $K$-basis
of $V = V_1 \oplus \ldots \oplus V_{\vartheta(\sigma)}$. 
Analogously to the $n$-cycle case we obtain the combinatorial sets $$Y_m := \left\{ y_{e_{m,i}} = \sum_{j=1}^{l_m} 
\xi_{l_m}^{(i-1)(j-1)} \cdot x_{\sigma_m^{j-1}(e_{m,1})} \;\Bigg|\; 1 \leq i \leq l_m\right\}$$ of eigenvectors of $\sigma_m \in 
\Aut(V_m)$ so that the representation matrix $M(\sigma_m,Y_m)$ of $\sigma_m \in \Aut(V_m)$ with respect to the 
$K$-basis $Y_m$ of $V_m$ is diagonal with eigenvalues \linebreak $\{1,\xi_{l_m},\xi_{l_m}^2,\ldots, \xi_{l_m}^{l_m-1}\}$. 
Hence, $Y = Y_1 \cup \ldots \cup Y_{\vartheta(\sigma)}$ is a $K$-basis of $V$, consists of eigenvectors of $\sigma \in 
\Aut(V)$, and the representation matrix 
$$M(\sigma,Y) = \left( \begin{array}{ccc}
         M(\sigma_1,Y_1) &        & \\
                             & \ddots & \\
                             &        & M(\sigma_{\vartheta(\sigma)},Y_{\vartheta(\sigma)})
         \end{array} \right) \in \Mat(n \times n, K)$$
of $\sigma \in \Aut(V)$ with respect to $Y$ is diagonal with eigenvalues $\{\xi_{l_m}^j \mid 1 \leq m \leq 
\vartheta(\sigma),\, 0 \leq j \leq l_m-1\} = \{\xi_{k}^{kj/l_m} \mid 1 \leq m \leq \vartheta(\sigma),\, 0 \leq j \leq 
l_m-1\}$.
\end{proof}

\begin{rem} \label{remAut}
Due to the constructive proof of Proposition \ref{propAut} the eigenvectors of $\sigma \in \Aut(V)$ can 
be obtained by purely combinatorial methods which is quite profitable from the algorithmic point of view. 
Hence, let us define the ring homomorphism 
\begin{align*}
\tau: K[X] & \longrightarrow  K[X] \\
           x_{e_{m,i}} & \longmapsto   y_{e_{m,i}}=\sum_{j=1}^{l_m} \xi_{l_m}^{(i-1)(j-1)} \cdot x_{\sigma_m^{j-1}(e_{m,i})}
\end{align*}
which maps the ring-variables onto the eigenvectors of $\sigma \in \Aut(V)$. Consequently, we can define 
another ring homomorphism $\sigma_\tau$ induced by the ring automorphism $\sigma \in \Aut(K[X])$, the 
linear transformation $\tau \in \Aut(K[X])$ and the commutative diagram
$$\begin{CD}
  K[X]       @>\sigma>>       K[X] \\
  @VV \tau V                  @VV \tau V \\
  K[X]       @>\sigma_\tau>>  K[X]
\end{CD}$$
so that $\sigma_\tau = \tau\sigma\tau^{-1}$ satisfies the property that $\sigma_\tau(x_i) = \xi_k^{\nu_i} \cdot 
x_i$ for suitable exponents $0 \leq \nu_i \leq k-1$ and all $1 \leq i \leq n$.
\end{rem}
 
\begin{exmp} \label{exmpMain1}
Let $\sigma = (12)(3) \in \sym_3$ with $\ord(\sigma) = 2$. Then consider $\xi_2 = -1 \in \Q$ and construct
\begin{align*}
\tau: \Q[x,y,z] & \longrightarrow \Q[x,y,z] \\ 
                     x & \longmapsto (-1)^{0 \cdot 0} \cdot x + (-1)^{0 \cdot 1} \cdot y = x + y, \\
                     y & \longmapsto (-1)^{1 \cdot 0} \cdot y + (-1)^{1 \cdot 1} \cdot x = y - x, \\
                     z & \longmapsto z
\end{align*}
as in Remark \ref{remAut}.
Hence, $\tau$ is bijective with inverse $\tau^{-1}$ defined by 
$$\tau^{-1}(x) = \frac{x - y}{2}, \quad \tau^{-1}(y) = \frac{x + y}{2}, \quad \tau^{-1}(z) = z.$$
Referring to Remark \ref{remAut} $\sigma_\tau$ is induced by $\sigma_\tau= \tau \sigma \tau^{-1}$ 
and thus it holds
\begin{align*}
\sigma_\tau(x) & = \tau\sigma\tau^{-1}(x) = \tau\sigma\Big(\frac{x-y}{2}\Big) = \tau\Big(\frac{y-x}{2}\Big) = 
\frac{y-x-x+y}{2} = -x, \\
\sigma_\tau(y) & = \tau\sigma\tau^{-1}(y) = \tau\sigma\Big(\frac{x+y}{2}\Big) = \tau\Big(\frac{x+y}{2}\Big) = 
\frac{x+y+(y-x)}{2} = y, \\
\sigma_\tau(z) & = \tau\sigma\tau^{-1}(y) = \tau\sigma(z) = \tau(z) =z.
\end{align*}
\end{exmp}

\begin{nota}
For a better understanding we will index objects that live on the transformed side by $\tau$.
\end{nota}

As aforementioned respectively proven above, the induced automorphism $\sigma_\tau$ has a nice 
multiplication property on the ring variables that, however, is a priori not a sufficient reason for a fast 
Gr\"obner basis computation. But, in addition, the linear transformation $\tau$ also respects the 
symmetry of the input ideal.

\begin{prop} \label{propSymmAut}
If the ideal $I \subseteq K[X]$ is $\sigma$-symmetric, then the transformed ideal $I_\tau := \tau(I) \in K[X]$ 
is $\sigma_\tau$-symmetric. 
\end{prop}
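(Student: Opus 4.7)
The statement is an essentially formal consequence of the conjugation relation established in Remark \ref{remAut}, so my plan is to reduce the claim to a one-line calculation and then spend most of the effort on confirming that each step is actually legal.

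First I would unpack the two symmetry statements. By definition, $I$ being $\sigma$-symmetric means $\sigma(I)=I$, and $I_\tau$ being $\sigma_\tau$-symmetric means $\sigma_\tau(I_\tau)=I_\tau$. From Remark \ref{remAut} we already know that $\sigma_\tau=\tau\sigma\tau^{-1}$, where $\tau\in\Aut(K[X])$. The plan is then simply to compute
\[
\sigma_\tau(I_\tau) \;=\; (\tau\sigma\tau^{-1})\bigl(\tau(I)\bigr) \;=\; \tau\bigl(\sigma(I)\bigr) \;=\; \tau(I) \;=\; I_\tau,
\]
using only the associativity of composition of ring homomorphisms, the identity $\tau^{-1}\tau=\id$, and the hypothesis $\sigma(I)=I$.

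Before writing this down I would double-check the two points that make the computation meaningful. The first is that $\tau$ is genuinely a ring automorphism, not merely a ring endomorphism, so that $\tau^{-1}$ makes sense on all of $K[X]$ and $\tau(I)$ is again an ideal; this is exactly the content of the construction in Remark \ref{remAut}, where $\tau$ sends the basis $X$ to the basis $Y$ of eigenvectors produced in the proof of Proposition \ref{propAut}. The second is that, since $\tau$ is an automorphism, applying it to an ideal commutes with arbitrary $K$-linear combinations and products, so the bracketed manipulation $(\tau\sigma\tau^{-1})(\tau(I))=\tau(\sigma(I))$ is legitimate at the level of ideals and not just at the level of sets.

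There is really no main obstacle here; the content of the proposition lies entirely in the commutative diagram of Remark \ref{remAut}, and once that diagram is in place the symmetry transports along $\tau$ automatically. The only thing that even resembles a subtlety is the verification that $\tau$ is invertible, which is ensured by the Vandermonde-type structure of the coefficients $\xi_{l_m}^{(i-1)(j-1)}$ used to define the $y_{e_{m,i}}$, but that is already built into the proof of Proposition \ref{propAut}.
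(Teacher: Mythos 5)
Your proof is correct and follows essentially the same route as the paper: both rest on the conjugation relation $\sigma_\tau=\tau\sigma\tau^{-1}$ from Remark \ref{remAut} together with the hypothesis $\sigma(I)=I$. The only cosmetic difference is that you compute directly with the ideal as a set, whereas the paper tracks the action on a generating set $\{f_1,\ldots,f_r\}$; both are valid and equally short.
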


\begin{proof} 
Let $I = \gen{f_1,\ldots,f_r}$. By definition of $\sigma_\tau$ we obtain for $\sigma(f_i) = f_j$ that 
$\sigma_\tau(\tau(f_i)) = \tau(\sigma(f_i)) = \tau(f_j)$. Thus, the ideal $I_\tau := \tau(I) = \gen{\tau(f_1),\ldots,
\tau(f_r)}$ is $\sigma_\tau$-symmetric. 
\end{proof}

\begin{exmp} \label{exmpMain2}
Let $>=>_{dp}$ be the degree reverse lexicographical ordering\footnote{\emph{Degree reverse lexicographical 
ordering:} Let $X^\alpha, X^\beta \in \Mon(X)$. $X^\alpha >_{dp} X^\beta \, :\Longleftrightarrow \, \deg(X^\alpha) 
> \deg(X^\beta)$ or $(\deg(X^\alpha) = \deg(X^\beta)$ and $\exists \, 1 \leq i \leq n: \; \alpha_n = \beta_n, 
\ldots, \alpha_{i-1} = \beta_{i-1}, \alpha_i < \beta_i)$, where $\deg(X^\alpha) = \alpha_1 + \ldots + \alpha_n$; 
cf.\ \cite{GP07}.}, and $\sigma,\tau,\sigma_\tau$ as in Example \ref{exmpMain1}.
Now we consider the $\sigma$-symmetric ideal $$I = \gen{x^2y^2 - z,\; xy - 2y + 3z,\; xy - 2x + 3z} 
\subseteq \Q[x,y,z]$$ and obtain that the transformed ideal
\begin{align*}
I_\tau := \tau(I) = \big\langle x^4-2x^2y^2+y^4-z,\, & -x^2+y^2+2x-2y+3z,  \\
                                              & -x^2+y^2-2x-2y+3z \big\rangle \subseteq \Q[x,y,z]
\end{align*}
is $\sigma_\tau$-symmetric.
\end{exmp}

Due to Proposition \ref{propAut} and Proposition \ref{propSymmAut} we see that transforming the original 
ideal via $\tau$ still respects some symmetry. 
In particular, the ideal $I_\tau$ is $\sigma_\tau$-symmetric and applying the automorphism $\sigma_\tau$ 
on any variable, respectively monomial, effects just a multiplication by a power of a primitive root of unity. 
The advantage of this circumstance is the fact that the symmetry propagates during the process of computing 
a Gr\"obner basis of $I_\tau$ which influences the performance in a positive way. More precisely, the following 
proposition holds.

\begin{prop} \label{propPropertyGB}
Let $I_\tau$ be $\sigma_\tau$-symmetric, then a Gr\"obner basis $G_\tau$ of $I_\tau$ satisfies 
$\sigma_\tau(g_\tau) = \xi_k^{\nu_{g_\tau}} \cdot g_\tau$ for all $g_\tau \in G_\tau$ with suitable 
$0 \leq \nu_{g_\tau} \leq k-1$.
\end{prop}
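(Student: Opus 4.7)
The plan is to use the reducedness of $G_\tau$ together with the fact that $\sigma_\tau$ acts diagonally on monomials. Recall from Remark \ref{remAut} that $\sigma_\tau(x_i) = \xi_k^{\nu_i} x_i$. Hence for any monomial $x^\alpha = x_1^{\alpha_1}\cdots x_n^{\alpha_n}$ we have
\[
\sigma_\tau(x^\alpha) \;=\; \xi_k^{\,\nu_\alpha}\, x^\alpha,\qquad \nu_\alpha := \sum_{i=1}^n \nu_i \alpha_i \bmod k,
\]
so every monomial is an eigenvector of $\sigma_\tau$. In particular, $\sigma_\tau$ preserves the underlying monomial support of any polynomial and rescales each monomial by a (generally different) power of $\xi_k$.

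Fix $g_\tau \in G_\tau$ and let $\nu := \nu_{\alpha_0}$, where $x^{\alpha_0} = \LM(g_\tau)$. The idea is to look at the auxiliary element
\[
h_\tau \;:=\; \sigma_\tau(g_\tau) \;-\; \xi_k^{\nu}\, g_\tau.
\]
By Proposition \ref{propSymmAut}, $\sigma_\tau(I_\tau) = I_\tau$, so $\sigma_\tau(g_\tau) \in I_\tau$ and therefore $h_\tau \in I_\tau$. Since $\sigma_\tau$ multiplies $\LM(g_\tau)$ precisely by $\xi_k^{\nu}$, the leading terms cancel; more explicitly, writing $g_\tau = \sum_\alpha c_\alpha x^\alpha$ one has
\[
h_\tau \;=\; \sum_{\alpha \neq \alpha_0} c_\alpha\bigl(\xi_k^{\nu_\alpha} - \xi_k^{\nu}\bigr)\, x^\alpha,
\]
so every monomial appearing in $h_\tau$ already appears in $\tail(g_\tau)$.

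The crux is now to invoke the reducedness convention on $G_\tau$: no monomial of $\tail(g_\tau)$ lies in the leading ideal $\LT(G_\tau) = \LT(I_\tau)$. Consequently, if $h_\tau$ were nonzero its leading monomial would be a monomial of $\tail(g_\tau)$, contradicting $h_\tau \in I_\tau$. Hence $h_\tau = 0$ and $\sigma_\tau(g_\tau) = \xi_k^{\nu} g_\tau$, proving the proposition with $\nu_{g_\tau} := \nu$.

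The argument has essentially one step that requires care, namely verifying that the difference $h_\tau$ really has support contained in $\tail(g_\tau)$; once that combinatorial observation is in place, the reducedness assumption on $G_\tau$ finishes the proof immediately. I do not foresee a serious obstacle beyond bookkeeping with the exponents $\nu_\alpha$.
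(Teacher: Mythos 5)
Your proof is correct, and it takes a genuinely different and more economical route than the paper's. The paper proves the proposition by tracking the symmetry through Buchberger's algorithm: it shows that $\sigma_\tau$ maps $\spoly(f,g)$ to a scalar multiple of $\spoly(\sigma_\tau(f),\sigma_\tau(g))$ and that normal forms transform compatibly, concludes that the reduced Gr\"obner basis is permuted up to scalars ($\sigma_\tau(g^\tau_i)=\xi_k^{\nu_{ij}}g^\tau_j$), and only then uses $\LM(g^\tau_i)=\LM(g^\tau_j)$ together with reducedness to force $i=j$. You instead argue a posteriori: since $\sigma_\tau$ acts diagonally on monomials, $h_\tau=\sigma_\tau(g_\tau)-\xi_k^{\nu}g_\tau$ lies in $I_\tau$ (using $\sigma_\tau(I_\tau)=I_\tau$) and has support inside $\tail(g_\tau)$, so a nonzero $h_\tau$ would have its leading monomial both in the leading ideal of $I_\tau$ and outside it by the reducedness convention --- a contradiction. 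Both arguments ultimately hinge on reducedness (the statement is false for arbitrary Gr\"obner bases, e.g.\ after replacing some $g$ by $g+g'$ with distinct eigenvalues), which the paper's standing Convention supplies. What your version buys is brevity and independence from the mechanics of Buchberger's algorithm; what the paper's version buys is the additional information that the eigenvector property propagates \emph{during} the computation of $G_\tau$, which is the heuristic justification for the claimed speed-up of \texttt{symmStd}, not merely a property of the final output.
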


\begin{proof}
Let $I_\tau = \gen{F_{I_\tau}}$ and $f,g \in F_{I_\tau}$. Due to the property of $\sigma_\tau$ we can define 
$X^\alpha :=\LM(f)=\LM(\sigma_\tau(f))$, $X^\beta:=\LM(g)=\LM(\sigma_\tau(g))$, $X^\gamma:=
\lcm(X^\alpha,X^\beta)$ and $\sigma_\tau(X^\alpha)=\xi_k^{\nu_\alpha} X^\alpha$, $\sigma_\tau(X^\beta)
=\xi_k^{\nu_\beta} X^\beta$, $\sigma_\tau(X^\gamma) =\xi_k^{\nu_\gamma} X^\gamma$ for suitable 
$\nu_\alpha, \nu_\beta, \nu_\gamma \in \set{0}{k-1}$.
Then $$\spoly(f,g) = X^{\gamma-\alpha}\cdot f - \frac{\LC(f)}{\LC(g)} \cdot X^{\gamma-\beta} \cdot g$$
and again by the property of $\sigma_\tau$ it holds $\LC(\sigma_\tau(f)) = \xi_k^{\nu_\alpha} \cdot \LC(f)$ 
respectively $\LC(\sigma_\tau(g)) = \xi_k^{\nu_\beta} \cdot \LC(g)$. Thus
\begin{align*}
\sigma_\tau(\spoly(f,g)) & = \sigma_\tau(X^{\gamma-\alpha}) \cdot \sigma_\tau(f) - \frac{\LC(f)}{\LC(g)} \cdot 
                                                  \sigma_\tau(X^{\gamma-\beta}) \cdot \sigma_\tau(g) \\
                                           & = \xi_k^{\nu_\gamma - \nu_\alpha} \cdot X^{\gamma - \alpha} \cdot \sigma_\tau(f)
                                                  - \frac{\LC(f)}{\LC(g)} \cdot \xi_k^{\nu_\gamma - \nu_\beta} \cdot X^{\gamma - \beta}
                                                  \cdot \sigma_\tau(g) \\
                                           & = \xi_k^{\nu_\gamma - \nu_\alpha} \cdot \left( X^{\gamma-\alpha} \cdot \sigma_\tau(f) 
                                                  - \frac{\LC(f)}{\LC(g)} \cdot \xi_k^{\nu_\alpha - \nu_\beta} \cdot X^{\gamma - \beta}
                                                  \cdot \sigma_\tau(g) \right) \\
                                           & = \xi_k^{\nu_\gamma - \nu_\alpha} \cdot \left( X^{\gamma-\alpha} \cdot \sigma_\tau(f) 
                                                  - \frac{\LC(\sigma_\tau(f))}{\LC(\sigma_\tau(g))} \cdot X^{\gamma - \beta}
                                                  \cdot \sigma_\tau(g) \right) \\
                                           & = \xi_k^{\nu_\gamma-\nu_\alpha} \cdot \spoly(\sigma_\tau(f),\sigma_\tau(g)).
\end{align*}
Moreover, there are $a_h, r \in K[X]$ such that $\spoly(f,g) = \sum_{h \in F_{I_\tau}} a_h h + r$. Due to the
above computation it follows $$\spoly(\sigma_\tau(f),\sigma_\tau(g)) = \xi_k^{\nu_\alpha-\nu_\gamma} \cdot
\sigma_\tau \left( \sum_{h \in F_{I_\tau}} a_h h + r \right) = \sum_{h \in F_{I_\tau}} b_h h + \xi_k^{\nu_\alpha
-\nu_\gamma} \cdot \sigma_\tau(r),$$ for suitable $b_h=\xi_k^{\nu_\alpha-\nu_\gamma} \cdot 
a_{\sigma_\tau^{-1}(h)} \in K[X]$ since $I_\tau$ respectively $F_{I_\tau}$ is $\sigma_\tau$-symmetric and 
consequently $$\NF\big(\spoly(\sigma_\tau(f), \sigma_\tau(g)), F_{I_\tau} \big) = \xi_k^{\nu_\alpha-\nu_\gamma} 
\cdot \sigma_\tau \left( \NF\big(\spoly(f,g), F_{I_\tau}\big) \right).$$ This property implies that the reduced 
Gr\"obner basis $G_\tau = \{g^\tau_1,\ldots, g^\tau_s \}$ of $I_\tau$ satisfies $\sigma_\tau(g^\tau_i) = 
\xi_k^{\nu_{ij}} \cdot g^\tau_j$ for suitable $i,j \in \set 1s$ and $\nu_{ij} \in \left\{0,\ldots,k-1\right\}$. Moreover, 
it follows $\LM(g^\tau_i) = \LM(\sigma_\tau(g^\tau_i)) = \LM(g^\tau_j)$, but since $G_\tau$ is reduced we 
conclude $g^\tau_i = g^\tau_j$. Hence, we have $\sigma_\tau(g^\tau_i) = \xi_k^{\nu_{i}} \cdot g^\tau_i$ for all 
$i \in \set 1s$ with suitable $\nu_{i} \in \set{0}{k-1}$. 
\end{proof}

\begin{exmp} \label{exmpMain3}
Let $I_\tau =  \langle x^4-2x^2y^2+y^4-z,\, -x^2+y^2+2x-2y+3z,\, -x^2+y^2-2x-2y+3z \rangle \subseteq \Q[x,y,z]$
and $\sigma_\tau \in \Aut(\Q[x,y,z])$ as obtained in Example \ref{exmpMain2}. Then $I_\tau$ is $\sigma_\tau$-
symmetric, and its Gr\"obner basis $$G_\tau=\{x,\; 12yz-9z^2-8y+13z,\; y^2-2y+3z,\; 81z^3+36z^2-56y+115z\}$$ 
satisfies $\sigma_\tau(g_\tau) = (-1)^{\nu_{g_\tau}} \cdot g_\tau$ for suitable $\nu_{g_\tau} \in \{1,2\}$ and all 
$g_\tau \in G_\tau$. Now, the reverse transformation of $G_\tau$ yields the set 
\begin{align*}
\tau^{-1}(G_\tau) = \big\{ & \tfrac 12x-\tfrac 12y,\; 6xz+6yz-9z^2-4x-4y+13z,\; \\
                                     & \tfrac 14x^2+\tfrac 12xy+\tfrac 14y^2-x-y+3z,\; 81z^3+36z^2-28x-28y+115z \big\}.
\end{align*}
\end{exmp}

Obviously, just pulling back the Gr\"obner basis $G_\tau$ via $\tau^{-1}$ does not lead to a Gr\"obner basis of
the input ideal $I$. Thus, we have to compute a Gr\"obner basis of the ideal $\gen{\tau^{-1}(G_\tau)}$ as well.
Nevertheless, the advantage of this computation is the fact that the achieved property as described in Proposition
\ref{propPropertyGB} is respected by applying $\tau^{-1}$ on $G_\tau$. More precisely, the following proposition 
holds.

\begin{prop} \label{propPropertyGB2}
$\sigma(g) = \xi_k^{\nu_g} \cdot g$ for all $g \in \tau^{-1}(G_\tau)$ and suitable $0 \leq \nu_g \leq k-1$.
\end{prop}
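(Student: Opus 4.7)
The plan is to leverage the commutative square defining $\sigma_\tau$ together with the eigenvector-type property of the transformed Gr\"obner basis established in Proposition \ref{propPropertyGB}. Concretely, from the diagram defining $\sigma_\tau$ in Remark \ref{remAut} we have the identity $\sigma_\tau \circ \tau = \tau \circ \sigma$ of ring automorphisms of $K[X]$. Since $\tau$ is bijective, this rearranges to $\sigma \circ \tau^{-1} = \tau^{-1} \circ \sigma_\tau$, which is the key relation I will use.

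Now take any $g \in \tau^{-1}(G_\tau)$ and write $g = \tau^{-1}(g_\tau)$ for a unique $g_\tau \in G_\tau$. By Proposition \ref{propPropertyGB} there exists $\nu_{g_\tau} \in \set{0}{k-1}$ with $\sigma_\tau(g_\tau) = \xi_k^{\nu_{g_\tau}} \cdot g_\tau$. Applying $\tau^{-1}$ and using the relation above, I compute
\begin{align*}
\sigma(g) \;=\; \sigma(\tau^{-1}(g_\tau)) \;=\; \tau^{-1}(\sigma_\tau(g_\tau)) \;=\; \tau^{-1}\bigl(\xi_k^{\nu_{g_\tau}} \cdot g_\tau\bigr) \;=\; \xi_k^{\nu_{g_\tau}} \cdot \tau^{-1}(g_\tau) \;=\; \xi_k^{\nu_{g_\tau}} \cdot g,
\end{align*}
so setting $\nu_g := \nu_{g_\tau}$ gives the desired conclusion.

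The only subtle point worth flagging is the step $\tau^{-1}(\xi_k^{\nu_{g_\tau}} \cdot g_\tau) = \xi_k^{\nu_{g_\tau}} \cdot \tau^{-1}(g_\tau)$: this uses that $\tau$, and hence $\tau^{-1}$, is a $K$-algebra homomorphism and that $\xi_k \in K$ by the standing assumption made before Remark \ref{remFieldExtension}. Apart from this, the argument is purely formal, so I do not anticipate any genuine obstacle; the proposition is essentially a transport of Proposition \ref{propPropertyGB} across the conjugation $\sigma = \tau^{-1} \sigma_\tau \tau$.
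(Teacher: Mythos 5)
Your proof is correct and is essentially the argument in the paper: both use the conjugation relation $\sigma = \tau^{-1}\sigma_\tau\tau$ from Remark \ref{remAut}, Proposition \ref{propPropertyGB}, and the $K$-linearity of $\tau^{-1}$ to transport the eigenvector property back to $\tau^{-1}(G_\tau)$. The only cosmetic difference is that the paper first computes $\tau(\sigma(g)) = \sigma_\tau(g_\tau)$ and then applies $\tau^{-1}$, whereas you rearrange the identity to $\sigma\circ\tau^{-1} = \tau^{-1}\circ\sigma_\tau$ up front.
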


\begin{proof}
Let $g \in \tau^{-1}(G_\tau)$, i.e. there is an $g_\tau \in G_\tau$ such that $g = \tau^{-1}(g_\tau)$. Then 
due to Proposition \ref{propAut} and Proposition \ref{propPropertyGB} we have $$\tau(\sigma(g)) = 
\tau(\sigma(\tau^{-1}(g_\tau))) = \sigma_\tau(g_\tau) = \xi_k^{\nu_{g_\tau}} \cdot g_\tau$$ for some 
$\nu_{g_\tau} \in \set{0}{k-1}$.
Hence, we obtain $$\sigma(g) = \tau^{-1}(\tau(\sigma(g))) = \tau^{-1}\left(\xi_k^{\nu_{g_\tau}} \cdot g_\tau 
\right) = \xi_k^{\nu_{g_\tau}} \cdot \tau^{-1}(g_\tau) = \xi_k^{\nu_{g_\tau}} \cdot g.$$ 
This proves the proposition.
\end{proof}

\begin{exmp} \label{exmpMain4}
Let $\sigma = (12)(3) \in \sym_3$ with $\ord(\sigma) = 2$, $\xi_2 = -1 \in \Q$ and $\tau^{-1}(G_\tau)$ as 
obtained in Example \ref{exmpMain3}. Then we compute
\begin{align*}
\sigma(\tfrac 12x-\tfrac 12y) & = -(\tfrac 12x-\tfrac 12y), \\
\sigma(6xz+6yz-9z^2-4x-4y+13z) & = 6xz+6yz-9z^2-4x-4y+13z, \\
\sigma(\tfrac 14x^2+\tfrac 12xy+\tfrac 14y^2-x-y+3z) & = \tfrac 14x^2+\tfrac 12xy+\tfrac 14y^2-x-y+3z, \\
\sigma(81z^3+36z^2-28x-28y+115z) & = 81z^3+36z^2-28x-28y+115z,
\end{align*}
as claimed in Proposition \ref{propPropertyGB2}.
\end{exmp}

The following diagram summarizes and illustrates our way of improving the computation of a Gr\"obner 
basis $G$ of a $\sigma$-symmetric ideal $I$. 

$$\begin{CD}
\big(I,\sigma\big)         @>\tau>>          \big(I_\tau,\sigma_\tau\big) \\
@.                                           @VV\texttt{std}V \\
\gen{\tau^{-1}(G_\tau)}        @<\tau^{-1}<<     G_\tau \\
@VV\texttt{std}V                             @.  \\
G                          @.                @.
\end{CD}$$ 

\vspace{0.2cm}

Note that the linear transformation $\tau$ is defined in Remark \ref{remAut} and the procedure \texttt{std} is 
implemented in \textsc{Singular} and computes a Gr\"obner basis (standard basis) of the input. 

Algorithm \ref{algSymmStd} computes the Gr\"obner basis of a $\sigma$-symmetric ideal $I$.\footnote{The 
corresponding procedures are implemented in \singular in the library \texttt{symodstd.lib}.}

\begin{algorithm}[h] 
\caption{Symmetric Gr\"obner Basis Computation (\texttt{symmStd})} \label{algSymmStd}
\begin{flushleft} Assume that $>$ is a degree ordering. \end{flushleft}

\begin{algorithmic}[1]
\REQUIRE $I \subseteq K[X]$ and $\sigma \in \sym_n$, such that $I$ is $\sigma$-symmetric.
\ENSURE $G \subseteq K[X]$, the Gr\"obner basis of $I$.
\vspace{0.1cm}
\STATE $k=\ord(\sigma)$;
\IF{$k \mod \ch(K) = 0$}
\PRINT Warning, algorithm is not applicable.
\RETURN $\emptyset$;
\ENDIF
\IF{$k=2$ \;or\; $(\ch(K)-1) \mod k = 0$}
\STATE compute $\xi_k \in K$;
\ELSE
\STATE $K = K[a]/\Phi_k(a)$;
\STATE $\xi_k:=a$;  
\ENDIF
\STATE compute $\tau \in \Aut(K[X])$;
\STATE $G_\tau = \texttt{std}(I_\tau)$;
\STATE $G = \texttt{std}(\langle\tau^{-1}(G_\tau)\rangle)$;
\RETURN $G$;
\end{algorithmic}
\end{algorithm}

\begin{thm} \label{thmSymmStd}
Algorithm \ref{algSymmStd} terminates and is correct, i.e.\ the output $G$ is a Gr\"obner basis of the
input $I$.
\end{thm}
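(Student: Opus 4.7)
The plan is to deduce the theorem directly from the propositions already proved in Section \ref{secSymmGB}, together with the known termination and correctness of Buchberger's algorithm; almost all of the substantive work has been done upstream.

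\emph{Termination.} Every step of the algorithm other than the two calls to \texttt{std} is finite by inspection: $\ord(\sigma)$ is read off from the cycle decomposition, the primitive root $\xi_k$ either lives in $K$ or is adjoined once via $K[a]/\Phi_k(a)$, and the map $\tau$ and its inverse are given by closed-form combinatorial formulas (Remark \ref{remAut}). The two invocations of \texttt{std} terminate because Buchberger's algorithm terminates on any finite input in a polynomial ring over a field.

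\emph{Correctness.} I would argue in three short steps. First, $\tau$ is a ring automorphism of $K[X]$: its defining values on $x_{e_{m,i}}$ are the elements $y_{e_{m,i}}$, which by the proof of Proposition \ref{propAut} form a $K$-basis of $V=\gen{x_1,\ldots,x_n}_K$, so the induced $K$-algebra endomorphism of $K[X]$ is bijective. (This is the only spot that really requires unpacking, and it is already packaged in Proposition \ref{propAut}.) In particular $\tau^{-1}$ exists as a ring automorphism, and $I_\tau = \tau(I)$ is an ideal with $\tau^{-1}(I_\tau)=I$. Second, after line 13, $G_\tau$ is, by correctness of \texttt{std}, a Gröbner basis of $I_\tau$; in particular it generates $I_\tau$ as an ideal. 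Applying the ring automorphism $\tau^{-1}$ gives
\[
  \bigl\langle\tau^{-1}(G_\tau)\bigr\rangle \;=\; \tau^{-1}(I_\tau) \;=\; I.
\]
Third, line 14 then returns $G=\texttt{std}(I)$, which is a Gröbner basis of $I$ by correctness of \texttt{std}.

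\emph{The subtle point.} The only place I expect any friction is the field-extension branch (lines 9--11): the second \texttt{std} call actually computes a Gröbner basis of $I\cdot K'[X]$ with $K'=K[a]/\Phi_k(a)$, whereas the input was an ideal in $K[X]$. This causes no harm because for any field extension $K\subseteq K'$ the reduced Gröbner basis of $I\cdot K'[X]$ coincides with that of $I$, so the output still deserves to be called the Gröbner basis of $I$; I would mention this briefly and otherwise leave it implicit, as the paper already flags the extension in Remark \ref{remFieldExtension}. The additional propositions \ref{propSymmAut}, \ref{propPropertyGB}, and \ref{propPropertyGB2} are not needed for correctness per se — they only explain why the detour through $\tau$ yields a practical speed-up — so I would not invoke them in the proof of Theorem \ref{thmSymmStd}.
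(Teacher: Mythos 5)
Your proof is correct and follows essentially the same route as the paper: both arguments reduce correctness to showing $\gen{\tau^{-1}(G_\tau)}=I$, which follows because $\tau$ is an automorphism and $G_\tau$ generates $I_\tau$, and both dismiss termination as immediate. Your explicit remarks that $\tau$ is bijective (via Proposition \ref{propAut}) and that the field-extension branch is harmless are welcome additions the paper leaves implicit, but they do not change the underlying argument.
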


\begin{proof}
Termination is clear and for proving correctness it suffices to show that $I = \gen{\tau^{-1}(G_\tau)}$
since $G$ is by definition a Gr\"obner basis of $\gen{\tau^{-1}(G_\tau)}$. 
Let $f \in I$ and $G_\tau = \set{g^\tau_1}{g^\tau_s}$. Then $\tau(f) \in \tau(I) = I_\tau$ and consequently 
there are $a_1,\ldots,a_s \in K[X]$ such that $\tau(f)=\sum_{i=1}^s a_i \cdot g^\tau_i$ since $G_\tau$ is 
a Gr\"obner basis of $I_\tau$.
Hence, we obtain $$f = \tau^{-1}(\tau(f)) = \sum_{i=1}^s \tau^{-1}(a_i) \cdot \tau^{-1}(g^\tau_i) \in 
\gen{\tau^{-1}(G_\tau)}.$$ For the other inclusion let $g \in \gen{\tau^{-1}(G_\tau)}$.
It follows that $\tau(g) \in \gen{G_\tau} = I_\tau = \tau(I)$ and moreover $g \in I$ since $\tau$ is an 
automorphism.
\end{proof}

For illustration of Algorithm \ref{algSymmStd} we combine all previous examples.

\begin{exmp}
Again, let $I = \gen{x^2y^2 - z,\; xy - 2y + 3z,\; xy - 2x + 3z} \subseteq \Q[x,y,z]$ and $\sigma = (12)(3) \in \sym_3$. 
Referring to Examples \ref{exmpMain1}, \ref{exmpMain2}, \ref{exmpMain3} and \ref{exmpMain4} we already 
obtained
\begin{align*}
I_\tau := \tau(I) = \big\langle x^4-2x^2y^2+y^4-z,\, & -x^2+y^2+2x-2y+3z,  \\
                                              & -x^2+y^2-2x-2y+3z \big\rangle,
\end{align*}
and its Gr\"obner basis $$G_\tau=\{x,\; 12yz-9z^2-8y+13z,\; y^2-2y+3z,\; 81z^3+36z^2-56y+115z\}$$
with 
\begin{align*}
\tau^{-1}(G_\tau) = \big\{ & \tfrac 12x-\tfrac 12y,\; 6xz+6yz-9z^2-4x-4y+13z,\; \\
                                     & \tfrac 14x^2+\tfrac 12xy+\tfrac 14y^2-x-y+3z,\; 81z^3+36z^2-28x-28y+115z \big\}.
\end{align*}
Finally, we compute
\begin{align*}
G = \{ & x-y,\; 12yz-9z^2-8y+13z,\; y^2-2y+3z,\; 
                       81z^3+36z^2-56y+115z \},
\end{align*}
the Gr\"obner basis of $\gen{\tau^{-1}(G_\tau)}$ respectively $I$.
\end{exmp}

\subsection{Examples and timings} \label{subsecSymmGBExTime}

In this section we provide examples on which we time the new algorithm \texttt{symmStd} (cf.\ 
Algorithm \ref{algSymmStd}) as opposed to the algorithm \texttt{std} implemented in \textsc{Singular}
(cf.\ \cite{DGPS12}). 
Timings are conducted by using \singular{3-1-3} on an AMD Opteron 6174 machine with $48$ CPUs, 
$2.2$ GHz, and $128$ GB of RAM running the Gentoo Linux operating system.

A more detailed description of the considered examples can be found in Section 
\ref{subsecSymmGBQExTime}.

\begin{exmp} \label{exmpCyclic7}
Cyclic $7$-roots, $\sigma_1 = (16)(25)(34) \in \sym_7$ with $\ord(\sigma_1) = 2$, $\sigma_2 = 
(1234567) \in \sym_7$ with $\ord(\sigma_2) = 7$.
\begin{small}
\begin{table}[H]
\begin{center}
\begin{tabular}{|c||c|c|c|c|c|}
\hline
\backslashbox{Algorithm}{$\ch(K)$} & $127$ & $30817$ & $100003$ & $2147483647$ \\
\hline \hline
\texttt{std} [sec] & 2 & 2 & 3 & 11 \\ \hline \hline
$\texttt{symmStd}(\_\,,\sigma_1)$ [sec] & 1 & 2 & 2 & 7 \\ \hline
$\texttt{symmStd}(\_\,,\sigma_1)/\texttt{std}$ & 0.50 & 1.00 & 0.67 & 0.64 \\ \hline \hline
$\texttt{symmStd}(\_\,,\sigma_2)$ [sec] & 5 & 5 & 7 & 20 \\ \hline
$\texttt{symmStd}(\_\,,\sigma_2)/\texttt{std}$ &
\hspace{0.4cm} 2.50 \hspace{0.4cm} & \hspace{0.4cm} 2.50 \hspace{0.4cm} & 
\hspace{0.4cm} 2.33 \hspace{0.4cm} & \hspace{0.4cm} 1.82 \hspace{0.4cm} \\ \hline
\end{tabular}
\end{center}
\end{table}
\end{small}
\end{exmp}

\begin{rem} \label{remSymmStd}
Note that in case of Example \ref{exmpCyclic7} the pure Gr\"obner basis computation is comparably 
easy so that the symmetry based approach decelerates the whole computation when applying the 
permutation of order $7$ so that the usage of the linear transformation $\tau$ (cf.\ Remark \ref{remAut}) 
dominates the process. This circumstance will partially also be transpired in the following examples.

Consequently, a permutation of higher order usually accelerates the Gr\"obner basis computation
on the transformed side but, on the other hand, may also decelerate the whole algorithm because
of an expensive application of the linear transformation.

However, summing up, we achieve an enormous advancement via \texttt{symmStd} (see the
following examples) although we have to compute Gr\"obner bases internally twice on modified 
input ideals via \texttt{std}.
\end{rem}

\vspace{0.1cm}

\begin{exmp}
Cyclic $8$-roots, $\sigma_1 = (18)(27)(36)(45) \in \sym_8$ with $\ord(\sigma_1) = 2$, $\sigma_2 = 
(1753)(2864) \in \sym_8$ with $\ord(\sigma_2) = 4$, $\sigma_3 = (12345678) \in \sym_8$ with 
$\ord(\sigma_3) = 8$.
\begin{small}
\begin{table}[H]
\begin{center}
\begin{tabular}{|c||c|c|c|c|c|}
\hline
\backslashbox{Algorithm}{$\ch(K)$} & $137$ & $30817$ & $100049$ & $2147483497$ \\
\hline \hline
\texttt{std} [sec] & 69 & 79 & 104 & 125 \\ \hline \hline
$\texttt{symmStd}(\_\,,\sigma_1)$ [sec] & 49 & 59 & 76 & 93 \\ \hline
$\texttt{symmStd}(\_\,,\sigma_1)/\texttt{std}$ & 0.71 & 0.75 & 0.73 & 0.74 \\ \hline \hline
$\texttt{symmStd}(\_\,,\sigma_2)$ [sec] & 32 & 36 & 46 & 55 \\ \hline
$\texttt{symmStd}(\_\,,\sigma_2)/\texttt{std}$ & 0.46 & 0.46 & 0.44 & 0.44 \\ \hline \hline
$\texttt{symmStd}(\_\,,\sigma_3)$ [sec] & 54 & 57 & 70 & 82 \\ \hline
$\texttt{symmStd}(\_\,,\sigma_3)/\texttt{std}$ &
\hspace{0.4cm} 0.78 \hspace{0.4cm} & \hspace{0.4cm} 0.72 \hspace{0.4cm} & 
\hspace{0.4cm} 0.67 \hspace{0.4cm} & \hspace{0.4cm} 0.66 \hspace{0.4cm} \\ \hline
\end{tabular}
\end{center}
\end{table}
\end{small}
\end{exmp}

\vspace{0.2cm}

\begin{exmp}
Cyclic $9$-roots, $\sigma_1 = (18)(27)(36)(45) \in \sym_9$ with $\ord(\sigma_1) = 2$, $\sigma_2 = 
(147)(258)(369) \in \sym_9$ with $\ord(\sigma_2) = 3$, $\sigma_3 = (123456789) \in \sym_9$ with 
$\ord(\sigma_3) = 9$.
\begin{small}
\begin{table}[H]
\begin{center}
\begin{tabular}{|c||c|c|c|c|c|}
\hline
\backslashbox{Algorithm}{$\ch(K)$} & $181$ & $30817$ & $100153$ & $2147483647$ \\
\hline \hline
\texttt{std} [sec] & 16458 & 17312 & 21077 & 24697 \\ \hline \hline
$\texttt{symmStd}(\_\,,\sigma_1)$ [sec] & 10655 & 9955 & 10881 & 13077 \\ \hline
$\texttt{symmStd}(\_\,,\sigma_1)/\texttt{std}$ & 0.65 & 0.58 & 0.52 & 0.53 \\ \hline \hline
$\texttt{symmStd}(\_\,,\sigma_2)$ [sec] &  002 & 4554 & 5471 & 6419 \\ \hline
$\texttt{symmStd}(\_\,,\sigma_2)/\texttt{std}$ & 0.24 & 0.26 & 0.26 & 0.26 \\ \hline \hline
$\texttt{symmStd}(\_\,,\sigma_3)$ [sec] & 4016 & 3756 & 4464 & 5272 \\ \hline
$\texttt{symmStd}(\_\,,\sigma_3)/\texttt{std}$ &
\hspace{0.4cm} 0.24 \hspace{0.4cm} & \hspace{0.4cm} 0.22 \hspace{0.4cm} & 
\hspace{0.4cm} 0.21 \hspace{0.4cm} & \hspace{0.4cm} 0.21 \hspace{0.4cm} \\ \hline
\end{tabular}
\end{center}
\end{table}
\end{small}
\end{exmp}

\vspace{0.2cm}

\begin{exmp}
100 Swiss Francs Problem, $\sigma = (45)(89) \in \sym_9$ with $\ord(\sigma) = 2$.
\begin{small}
\begin{table}[H]
\begin{center}
\begin{tabular}{|c||c|c|c|c|c|}
\hline
\backslashbox{Algorithm}{$\ch(K)$} & $181$ & $30817$ & $100153$ & $2147483647$ \\
\hline \hline
\texttt{std} [sec] & 5 & 5 & 6 & 8 \\ \hline
$\texttt{symmStd}(\_\,,\sigma)$ [sec] & 2 & 3 & 4 & 5 \\ \hline
$\texttt{symmStd}(\_\,,\sigma)/\texttt{std}$ &
\hspace{0.4cm} 0.40 \hspace{0.4cm} & \hspace{0.4cm} 0.60 \hspace{0.4cm} & 
\hspace{0.4cm} 0.67 \hspace{0.4cm} & \hspace{0.4cm} 0.63 \hspace{0.4cm} \\ \hline
\end{tabular}
\end{center}
\end{table}
\end{small}
\end{exmp}

\begin{exmp}
$7 - 4.3^2 - 4.3^2$ for $\sym_{11}$, $\sigma = (15)(26)(37)(48) \in \sym_{10}$ with 
$\ord(\sigma) = 2$.
\begin{small}
\begin{table}[H]
\begin{center}
\begin{tabular}{|c||c|c|c|c|c|}
\hline
\backslashbox{Algorithm}{$\ch(K)$} & $181$ & $30817$ & $100153$ & $2147483647$ \\
\hline \hline
\texttt{std} [sec] & 3 & 4 & 5 & 6 \\ \hline
$\texttt{symmStd}(\_\,,\sigma)$ [sec] & 1 & 2 & 2 & 2 \\ \hline
$\texttt{symmStd}(\_\,,\sigma)/\texttt{std}$ &
\hspace{0.4cm} 0.33 \hspace{0.4cm} & \hspace{0.4cm} 0.50 \hspace{0.4cm} & 
\hspace{0.4cm} 0.40 \hspace{0.4cm} & \hspace{0.4cm} 0.33 \hspace{0.4cm} \\ \hline
\end{tabular}
\end{center}
\end{table}
\end{small}
\end{exmp}

\begin{exmp}
$7 - 5.4 - 5.4$ for $\sym_{11}$, $\sigma = (15)(26)(37)(48) \in \sym_{10}$ with 
$\ord(\sigma) = 2$.
\begin{small}
\begin{table}[H]
\begin{center}
\begin{tabular}{|c||c|c|c|c|c|}
\hline
\backslashbox{Algorithm}{$\ch(K)$} & $181$ & $30817$ & $100153$ & $2147483647$ \\
\hline \hline
\texttt{std} [sec] & 4 & 4 & 5 & 6 \\ \hline
$\texttt{symmStd}(\_\,,\sigma)$ [sec] & 1 & 1 & 2 & 3 \\ \hline
$\texttt{symmStd}(\_\,,\sigma)/\texttt{std}$ &
\hspace{0.4cm} 0.25 \hspace{0.4cm} & \hspace{0.4cm} 0.25 \hspace{0.4cm} & 
\hspace{0.4cm} 0.40 \hspace{0.4cm} & \hspace{0.4cm} 0.50 \hspace{0.4cm} \\ \hline
\end{tabular}
\end{center}
\end{table}
\end{small}
\end{exmp}

\section{Gr\"obner bases using symmetry and modular methods} \label{secSymmGBQ}

When applying Algorithm \ref{algSymmStd} on $\sigma$-symmetric ideals over the rationals so that 
$\ord(\sigma) = k >2$ we need to swap to $\Q[a]/\Phi_k(a)$ as explained in Remark \ref{remFieldExtension}.
However, over fields $K$ of positive characteristic such that $k \mid (\ch(K) - 1)$ this can be omitted.
Consequently, we use modular methods to improve Algorithm \ref{algSymmStd} applied on 
$\sigma$-symmetric ideals in the polynomial ring over the rationals. More precisely, we improve the 
modular Gr\"obner basis algorithm as introduced by Arnold (cf.\ \cite{A03}), Idrees, Pfister, Steidel 
(cf.\ \cite{IPS11}) and Noro, Yokoyama (cf.\ \cite{NY12}, \cite{Y12}).

\begin{rem} \label{remAddVerif}
Noro and Yokoyama revealed that \cite[Theorem 2.4]{IPS11} for the inhomogeneous case is
only correct with an additional assumption. 

Let $I \subseteq \Q[X]$ be an ideal generated by a finite subset $F_I$. For homogenization we provide
an extra variable $t$ and define $f^h := t^d \cdot f(x_1/t,\ldots,x_n/t)$ for $f \in \Q[X]$ where $d$ is
the total degree of $f$ and $F_I^h := \{f^h \mid f \in F_I\}$. Moreover, $>$ induces a monomial ordering
$>_h$ such that $X^\alpha t^a >_h X^\beta t^b$ if and only if either $|\alpha| + a > |\beta| + b$ or
$(|\alpha| + a = |\beta| + b$ and $X^\alpha > X^\beta)$.
Then Noro and Yokoyama advise to add the condition that $p$ is lucky for $\langle F_I^h \cup \{t^m\}
\rangle$ with respect to $>_h$ where $m$ is an integer such that $\big(\langle \Phi_p(F_I^h)\rangle : 
t^m\big) = \big(\langle \Phi_p(F_I^h)\rangle : t^\infty\big)$ and $\Phi_p$ denotes the canonical
projection to $\F_p[X]$ for a prime $p$ (cf.\ \cite{NY12}, \cite{Y12}).
\end{rem}

\subsection{The probabilistic symmetric modular Gr\"obner basis algorithm} \label{subsecSymmGBQAlg}

Algorithm \ref{algSyModStd} combines the algorithms \texttt{symmStd} (cf.\ Algorithm \ref{algSymmStd}) 
and a modification of \texttt{modStd} (cf.\ \cite[Algorithm 1]{IPS11}).\footnote{The corresponding procedures 
are implemented in \singular in the library \texttt{symodstd.lib}.}

\begin{algorithm}[h] 
\caption{Symmetric Modular Gr\"obner Basis Computation (\texttt{syModStd})} \label{algSyModStd}
\begin{flushleft} Assume that $>$ is a degree ordering. \end{flushleft}

\begin{algorithmic}[1]
\REQUIRE $I \subseteq \Q[X]$ and $\sigma \in \sym_n$, such that $I$ is $\sigma$-symmetric.
\ENSURE $G \subseteq \Q[X]$, the Gr\"obner basis of $I$.
\vspace{0.1cm}
\STATE $k=\ord(\sigma)$;
\STATE choose $P$, a list of random primes such that $k \mid (p-1)$ for all $p \in P$;
\STATE $GP = \emptyset$;
\LOOP
\FOR{$p \in P$}
\STATE $G_p = \texttt{symmStd}(I_p, \sigma)$;
\STATE $GP = GP \cup \{G_{p}\}$;
\ENDFOR
\STATE $(GP,P) = \texttt{deleteUnluckyPrimesSB}(GP,P)$;
\STATE lift $(GP,P)$ to $G \subseteq \Q[X]$ by applying Chinese remainder algorithm and Farey rational map;
\IF{$G$ passes \texttt{finalVerificationTests}}
\RETURN $G$;
\ENDIF
\STATE enlarge $P$;
\ENDLOOP
\end{algorithmic}
\end{algorithm}

\begin{rem}
The essential differences of the algorithm \texttt{symModStd} compared to the algorithm \texttt{modStd} 
are the following:
\begin{enumerate}
\item The choice of the prime list $P$ has to be restricted. Every considered prime number $p \in P$ has to
          satisfy the condition $k \mid (p-1)$ in order to assure that the coefficient field $\F_p$ has a $k$--th 
          primitive root of unity.
\item The modular Gr\"obner bases $G_p$ are computed via \texttt{symmStd} instead of \texttt{std}.
\end{enumerate}
Similar to \texttt{modStd} we can parallelize Algorithm \ref{algSyModStd} by computing the modular 
Gr\"obner bases $G_p$ respectively performing the final tests in parallel.
\end{rem}

\begin{thm} \label{thmSyModStd}
Algorithm \ref{algSyModStd} terminates and is correct, i.e.\ the output $G$ is a Gr\"obner basis of the
input $I$.
\end{thm}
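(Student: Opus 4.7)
The plan is to reduce the theorem to the correctness and termination of the modular Gr\"obner basis algorithm \texttt{modStd} from \cite{IPS11} (with the correction recorded in Remark \ref{remAddVerif}), observing that the only genuinely new ingredient here is the replacement of \texttt{std} by \texttt{symmStd} on the modular side together with the restriction of the prime list $P$ to primes satisfying $k \mid (p-1)$. Since the final verification mechanism and the filtering via \texttt{deleteUnluckyPrimesSB} are unchanged, the proof should have the structure of the \texttt{modStd} proof and invoke Theorem \ref{thmSymmStd} to justify the modular step.

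First I would argue termination. By Dirichlet's theorem on primes in arithmetic progressions there are infinitely many primes $p$ with $k \mid (p-1)$, so the list $P$ can be enlarged indefinitely in line~14. For each admissible $p$ the field $\F_p$ contains a $k$-th primitive root of unity, hence $\texttt{symmStd}(I_p,\sigma)$ is well-defined and, by Theorem \ref{thmSymmStd}, returns an honest Gr\"obner basis of $I_p \subseteq \F_p[X]$. One is then in the framework of \cite{A03, IPS11, NY12, Y12}: only finitely many admissible primes can be unlucky for $I$, and once the product of the remaining lucky primes in $P$ exceeds twice the absolute value of every numerator and denominator in the true rational Gr\"obner basis of $I$, the Chinese-remainder plus Farey reconstruction recovers that basis exactly and the final verification succeeds. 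Hence the loop terminates after finitely many iterations.

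Second I would treat correctness. Whenever the algorithm exits at line~12, the candidate $G \subseteq \Q[X]$ has passed \texttt{finalVerificationTests}. Using exactly the test of \cite[Algorithm 1]{IPS11} augmented by the extra condition of Remark \ref{remAddVerif} on luckiness for $\langle F_I^h \cup \{t^m\}\rangle$ with respect to $>_h$, a successful test guarantees both that $\langle G \rangle = I$ and that $\LM(G)$ agrees with the leading ideal of $I$, so that $G$ is indeed the reduced Gr\"obner basis of $I$.

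The main obstacle will be verifying that the \texttt{deleteUnluckyPrimesSB} step and the final verification tests still behave as intended when the pool of primes is restricted to the arithmetic progression $\{p : k \mid p-1\}$: one must check that Arnold's notion of luckiness, the liftability criterion of \cite{IPS11}, and the Noro--Yokoyama homogenization-saturation condition all depend only on the individual modular reductions and not on the full prime pool, so that restricting to an infinite sub-progression is harmless. Once this is granted, the theorem follows by combining Theorem \ref{thmSymmStd} with the corrected version of \cite[Theorem 2.4]{IPS11}.
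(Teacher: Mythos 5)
Your proposal is correct and takes essentially the same route as the paper, whose proof is a one-liner citing Theorem \ref{thmSymmStd} for the modular computations and the Noro--Yokoyama improvement of \cite[Theorem 2.4]{IPS11} for the lifting and verification. Your additional remarks (Dirichlet's theorem for the supply of admissible primes, and the observation that luckiness and the verification tests are insensitive to restricting the prime pool to an arithmetic progression) simply make explicit details the paper leaves implicit.
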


\begin{proof}
Termination is clear and correctness follows directly from Theorem \ref{thmSymmStd} and 
the improvement of \cite[Theorem 2.4]{IPS11} by Noro and Yokoyama (cf.\ \cite{NY12}, \cite{Y12}).
\end{proof}

The symmetric part of the symmetric modular Gr\"obner basis algorithm is not influenced by the additional
verification test mentioned in Remark \ref{remAddVerif} and for this verification part the ideal $\langle F_I^h 
\cup \{t^m\} \rangle$ is homogeneous, and symmetric if $I$ is so, such that Algorithm \ref{algSyModStd}
can be directly applied to it without verifying the additional condition (cf.\ \cite{A03}).

Nevertheless, the additional verification test, in general, decelerates the whole algorithm considerably.
In order to emphasize the impact of Algorithm \ref{algSymmStd} we therefore just time a probabilistic
variant (call it $\texttt{syModStd}^*$ and $\texttt{modStd}^*$, respectively) by skipping the additional
verification due to Noro and Yokoyama.

\subsection{Examples and timings} \label{subsecSymmGBQExTime}

In this section we provide examples on which we time the new algorithms \texttt{symmStd} (cf.\ 
Algorithm \ref{algSymmStd}) respectively $\texttt{syModStd}^*$ (cf.\ Algorithm \ref{algSyModStd}) 
and its parallelization as opposed to the former algorithms \texttt{std} respectively 
$\texttt{modStd}^*$ implemented in \textsc{Singular}  (cf.\ \cite{DGPS12}). 
Again, all timings are conducted by using \singular{3-1-3} on an AMD Opteron 6174 machine with 
$48$ CPUs, $2.2$ GHz, and $128$ GB of RAM running the Gentoo Linux operating system.

\begin{exmp}[Cyclic $n$-roots (cf.\ \cite{Bj85}, \cite{Bj90}, \cite{BF91})] \label{exmpCyclic}
The task to compute a Gr\"obner basis of the ideal in $\Q[X]=\Q[x_1,\ldots,x_n]$ corresponding to the 
following system of polynomial equations 
\begin{align*}
x_1+\ldots+x_n & = 0, \\
x_1x_2 + x_2x_3 + \ldots + x_{n-1}x_n + x_nx_1 & = 0, \\
\vdots & \quad\;\, \vdots \\
x_1x_2\cdots x_{n-1} + x_2x_3 \cdots x_n + \ldots + x_{n-1}x_n\cdots x_{n-3} + x_n x_1 \cdots x_{n-2} & = 0, \\
x_1\cdots x_n - 1 & = 0
\end{align*}
has become a benchmark problem for Gr\"obner basis techniques. We call this ideal \emph{cyclic$(n)$},
and its variety \emph{cyclic $n$-roots} (cf.\ \cite{Bj85}). The origin of the problem is related to Fourier
analysis (cf.\ \cite{Bj85}, \cite{Bj90}). 
Obviously, the ideal cyclic$(n)$ is by definition symmetric with respect to the $n$-cycle $\sigma_n = (1\ldots n)$
such that we can apply the algorithms \texttt{symmStd} and $\texttt{syModStd}^*$.

Until the end of 2009, \singular was able to compute a Gr\"obner basis of cyclic$(n)$ for $n \leq 8$. In
April 2010, we could for the first time compute a Gr\"obner basis of cyclic$(9)$ via a prototype of $\texttt{syModStd}^*$
by using the 32-bit version of \singular{3-1-1} on an Intel\textregistered \ Xeon\textregistered \ X5460 machine with 
$4$ CPUs, 3.16 GHz each, and 64 GB of RAM under the Gentoo Linux operating system within 23 days. 

Table \ref{tabCyclic} summarizes the present timings for computing a Gr\"obner basis of cyclic$(n)$ for $n=7,8,9$ 
with different numbers of cores $\ell$ denoted by $\texttt{modStd}^*(\ell)$ respectively $\texttt{syModStd}^*(\ell)$, 
and different permutations $\sigma$ where again $k = \ord(\sigma)$ denotes the order of $\sigma$.

\begin{small}
\begin{table}[hbt]
\begin{center}
\begin{tabular}{|r|r|r|r|r|r|r|}
\hline
$n$ & $k$ & \multicolumn{1}{c|}{\texttt{symmStd}} & \multicolumn{1}{c|}{$\texttt{modStd}^*(1)$} 
& \multicolumn{1}{c|}{$\texttt{modStd}^*(30)$} & \multicolumn{1}{c|}{$\texttt{syModStd}^*(1)$}
& \multicolumn{1}{c|}{$\texttt{syModStd}^*(30)$} \\
\hline \hline
$7$ & $2$ & 317 & 111 & 34 & 69 & 29 \\ \hline
$7$ & $7$ & \hspace{0.7cm} 409394 & \hspace{1.3cm} 106 & \hspace{1.4cm} 38 & \hspace{1.3cm} 165 
& \hspace{1.4cm} 50 \\ \hline
$8$ & $2$ & - & 6816 & 973 & 5196 & 811 \\ \hline
$8$ & $4$ & - & 6816 & 973 & 3120 & 620 \\ \hline
$8$ & $8$ & - & 6816 & 973 & 4454 & 788 \\ \hline
$9$ & $3$ & - & 9935103 & 475981 & 2790303 & 207681 \\ \hline 
\end{tabular}
\end{center}
\hspace{15mm}
\caption{Total running times in seconds for computing a Gr\"obner basis of cyclic$(n)$ for $n=7,8,9$
via \texttt{symmStd}, $\texttt{modStd}^*(\ell)$, and $\texttt{syModStd}^*(\ell)$ for $\ell = 1,30$, 
using different permutations of order $k$. The symbol ``-'' indicates out of memory failures.} 
\label{tabCyclic}
\end{table}
\end{small}
In these examples we used the permutations $(16)(25)(34), (1234567) \in \sym_7$ for $n = 7$, the permutations 
$(18)(27)(36)(45), (1753)(2864), (12345678) \in \sym_8$ for $n=8$, and the permutation $(147)(258)(369) 
\in \sym_9$ for $n=9$.

Note that the symmetric approach in the modular version just influences the Gr\"obner basis computation
in each prime characteristic. This means, on the one hand, that in most cases the use of parallelization
decreasing the number of sequentially accomplished Gr\"obner basis computations is more decisive for 
higher efficiency than just applying the symmetry based approach.
On the other hand, if the calculations in positive characteristic are comparably easy as in the 
cyclic$(7)$-case, then the symmetry based approach may even slow down the whole process since the 
usage of the linear transformation $\tau$ and the second Gr\"obner basis computation in each prime 
characteristic overrun the original calculations of the purely modular approach.

Moreover, note that the timings obtained by the modular versions are dependent on the used permutation 
and especially on its order. In particular, a higher order $k$, that is a higher symmetry, speeds up the 
Gr\"obner basis computation on the transformed side but in contrast slows down the application of the linear 
transformation $\tau$ (cf.\ Remark \ref{remAut}) and, in addition, allocates more memory since the support 
of a ring variable's image depends on the order $k$ of the permutation.
This circumstance justifies that applying the symmetric modular algorithm for computing a Gr\"obner basis 
of cyclic$(8)$ is most performant when using the permutation $(1753)(2864) \in \sym_8$ of order $4$.
Similarly, we make use of the permutation $(147)(258)(369) \in \sym_9$ of order $3$ for cyclic$(9)$ since
considering a permutation of order $9$ implies substituting each ring variable by a linear combination of
$9$ ring variables when applying the linear transformation $\tau$, so that the parallel computation crashes
because of memory overflow.
\end{exmp}

\begin{exmp}[$100$ Swiss Francs Problem (cf.\ \cite{ZJG11}, \cite{St08})]
Sturmfels offered a cash prize of $100$ Swiss Francs for the resolution of a very specific conjecture
in the \emph{Nachdiplomsvorlesung} (postgraduate course) which he held at ETH Z\"urich in the summer
of 2005. Based on a concrete biological example proposed in \cite[Example 1.16]{PS05} the problem
arisen to maximize the likelihood function $$L(P) = \left(\prod_{i=1}^4 p_{ii} \right)^4 \cdot \left( \prod_{i \neq j}
p_{ij} \right)^2 \cdot \left( \sum_{i,j = 1}^4 p_{ij}\right)^{-40}$$ over all (positive) $4 \times 4$-matrices $P =
(p_{ij})_{1 \leq i,j \leq 4}$ of rank at most two. Due to numerical experiments by applying an 
expectation-maximization algorithm (EM algorithm), B.\ Sturmfels conjectured that the matrix $$P =
\frac{1}{40} \matrix{cccc}{3&3&2&2\\ 3&3&2&2\\ 2&2&3&3\\ 2&2&3&3}$$ is a global maximum of the
likelihood function $L(P)$ (cf.\ \cite{St08}).

The conjecture is positively confirmed in \cite{ZJG11}. In their approach via Gr\"obner bases 
(cf.\ \cite[Section 2.3]{ZJG11}) it is necessary to compute the Gr\"obner basis of the ideal $J$ defined by
\begin{align*}
I & = \gen{a_1-b_1, \sum_{i=1}^4a_1, \sum_{i=1}^4b_i,f_1,\ldots,f_4,g_1,\ldots,g_4} \subseteq 
         \Q[a_1,\ldots,a_4,b_1,\ldots,b_4]
\end{align*}
with
\begin{align*}
f_i & = \sum_{j=1}^4 \left(b_j \cdot (1+a_ib_i)\cdot\prod_{k\neq j}(1+a_ib_k) \right) + b_i \cdot 
             \prod_{k=1}^4(1+a_ib_k), \\
g_i & = \sum_{j=1}^4 \left(a_j \cdot (1+a_ib_i)\cdot\prod_{k\neq j}(1+a_kb_i) \right) + a_i \cdot 
             \prod_{k=1}^4(1+a_kb_i)
\end{align*}
for $1 \leq i \leq 4$, and
\begin{align*}
J = I + \gen{1-ua_1} \subseteq \Q[a_1,\ldots,a_4,b_1,\ldots,b_4,u]
\end{align*}
with respect to an elimination ordering on the variable $u$. 
In a first approach we therefore applied \texttt{modStd} using the lexicographical ordering $>_{lp}$ respectively the 
block ordering $(>_{dp(8)},>_{lp(1)})$ to eliminate the variable $u$. It turned out that both variants are comparably 
slow so that we used in a second approach the degree reverse lexicographical ordering $>_{dp}$, and applied the 
FGLM-algorithm (cf.\ \cite{FGLM93}) subsequently to obtain a Gr\"obner basis with respect to the block ordering 
$(>_{dp(8)},>_{lp(1)})$. Since the ideal $J \subseteq \Q[a_1,\ldots,a_4,b_1,\ldots,b_4,u]$ is symmetric with respect 
to the permutation $(34)(78) \in \sym_{9}$ we could moreover apply the algorithm $\texttt{syModStd}^*$. The timings 
for the computations in \singular are summarized in Table \ref{tab100SwissFrancs}.

\begin{small}
\begin{table}[hbt]
\begin{center}
\begin{tabular}{|c|c|}
\hline
Method & Running Time \\
\hline \hline
$\texttt{modStd}^*[>_{lp}]$ & 39919 \\ \hline
$\texttt{modStd}^*[(>_{dp(8)},>_{lp(1)})]$ & 515 \\ \hline
$\texttt{syModStd}^*[(>_{dp(8)},>_{lp(1)})]$ & 356 \\ \hline
$\texttt{modStd}^*[>_{dp}]$ -- \texttt{fglm}$[(>_{dp(8)},>_{lp(1)})]$ & 375 \\ \hline
$\texttt{syModStd}^*[>_{dp}]$ -- \texttt{fglm}$[(>_{dp(8)},>_{lp(1)})]$ & 284 \\ \hline
\end{tabular}
\end{center}
\hspace{15mm}
\caption{Total running times in seconds for computing the Gr\"obner basis of $J \subseteq \Q
[a_1,\ldots,a_4,b_1,\ldots,b_4,u]$ with respect to an elimination ordering on the variable $u$
via different methods.} 
\label{tab100SwissFrancs}
\end{table}
\end{small}
\end{exmp}

\begin{exmp}[Inverse Galois Problem (cf.\ \cite{Mal94}, \cite{Mat87}, \cite{MM99})]
A major topic in algebraic number theory is the inverse Galois problem over a field $K$, i.e.\ the 
question whether any finite group $G$ is the Galois group of a Galois extension of $K$.
The most interesting case is $K = \Q$ which is still open in general.
In contrast, the problem is known to be true for $K$ being a rational function field in one variable $t$ 
over an algebraically closed field of characteristic zero.
In particular, it is true for $K=\C(t)$, and in this case it is solved via geometric field extensions (see for
example \cite[I, \S1]{MM99}). Moreover, the same strategy applies to finite field extensions
of $\overline \Q(t)$ ramified only over $\{0,1,\infty\}$ (see for example \cite[I, \S5]{MM99}).
In this situation, for any triple $\sigma=(\sigma_1,\sigma_2,\sigma_3)$ of elements generating a transitive 
subgroup $G=\gen{\sigma_1,\sigma_2,\sigma_3} \subseteq \sym_n$ with $\sigma_1\sigma_2\sigma_3 = 1$ 
there exists a certain field extension $\overline K_\sigma/\overline{\Q}(t)$ of degree $n$, unramified outside 
$\{0,1,\infty\}$, and whose Galois group is isomorphic to $G$.
In fact, any such extension $\overline K_\sigma/\overline \Q(t)$ is already defined over a number field 
$k_\sigma = \Q(\alpha_\sigma)$, the so-called field of definition of $\overline K_\sigma/\overline \Q(t)$, 
so that there exists a further field extension $K_\sigma/k_\sigma(t)$ which also has $G$ as its Galois group.
The degree $[k_\sigma:\Q]$ is bounded from above by group theoretical information (see for example 
\cite[Proposition A]{Mal94}).
In order to construct the extension $\overline K_\sigma/\overline \Q(t)$ it is necessary to solve a system of
polynomial equations (see \cite[I, \S9]{MM99}). In case that, for example, $\sigma_1$ and $\sigma_2$ have 
the same cycle type, the defining ideal is symmetric with respect to a permutation 
of order $2$ so that we can apply the algorithms \texttt{symmStd} and \texttt{syModStd} to compute a Gr\"obner
basis of this system. In addition, choosing an elimination ordering for the last ring variable, the last 
polynomial $f$ of the Gr\"obner basis of this system of non-linear equations generates 
the field of definition $k_\sigma = \Q(\alpha_\sigma)$ insofar that $\alpha_\sigma$ is a zero of $f$. 
The irreducible factors of $f$ together with group theoretical information yield restrictions on $[k_\sigma:\Q]$.
In case that $[k_\sigma:\Q]=1$, the given group $G$ can even be realized over $\Q(t)$, and therefore also over 
$\Q$ by Hilbert's irreducibility theorem.

In $1994$, Malle collected computational data on several $k_\sigma$ of degree $[k_\sigma:\Q] \leq 13$ 
(cf.\ \cite{Mal94}) with the intention to observe regularities and hints to decrease the group theoretical bound. 
Table \ref{tabFieldExtension} lists further examples in the spirit of this article and which could not be computed 
at that time.

\begin{small}
\begin{table}[hbt]
\begin{center}
\begin{tabular}{|c|c|c|r|r|r|r|}
\hline
$n$ & \multicolumn{1}{|c|}{$G$} & \multicolumn{1}{c|}{$C_\sigma$} & \multicolumn{1}{c|}{$[k_\sigma : \Q]$} 
& \multicolumn{1}{c|}{\texttt{symmStd}} & \multicolumn{1}{c|}{$\texttt{modStd}^*$}
& \multicolumn{1}{c|}{$\texttt{syModStd}^*$} \\
\hline \hline
$7$ & $\A_7$ & $4.2 - 4.2 - 4.2$ & $12$ & \hspace{1.2cm} 24 & \hspace{1.2cm} 19 & \hspace{1.2cm} 19 \\ \hline
$9$ & $\sym_9$ & $4.2^2 - 4.2^2 - 5.3$ & $34$ & 5 & 15 & 13 \\ \hline
$10$ & $\A_{10}$ & $5.2^2 - 5.2^2 - 7$ & $37$ & 977 & 186 & 107 \\ \hline 
$11$ & $\A_{11}$ & $4.2 - 9 - 9$ & $12$ & 17 & 2 & 1 \\ \hline
$11$ & $\A_{11}$ & $4.2 - 4^2.3 - 4^2.3$ & $8$ & 34 & 3 & 2 \\ \hline 
$11$ & $\A_{11}$ & $4.2 - 5.3^2 - 5.3^2$ & $8$ & 15 & 3 & 2 \\ \hline
$11$ & $\A_{11}$ & $5 - 8.2 - 8.2$ & $11$ & 265 & 17 & 8 \\ \hline
$11$ & $\A_{11}$ & $5 - 6.4 - 6.4$ & $11$ & 339 & 20 & 10 \\ \hline
$11$ & $\A_{11}$ & $5 - 7.3 - 7.3$ & $11$ & 292 & 16 & 8 \\ \hline 
$11$ & $\sym_{11}$ & $7 - 4.3^2 - 4.3^2$ & $26$ & 631245 & 2506 & 1493 \\ \hline 
$11$ & $\sym_{11}$ & $7 - 4.3.2^2 - 4.3.2^2$ & $29$ & - & 3039 & 1979 \\ \hline
$11$ & $\sym_{11}$ & $7 - 7.2 - 7.2$ & $29$ & - & 1414 & 702 \\ \hline
$11$ & $\sym_{11}$ & $7 - 5.4 - 5.4$ & $26$ & - & 1738 & 899 \\ \hline
\end{tabular}
\end{center}
\hspace{15mm}
\caption{Total running times in seconds for computing the defining Gr\"obner basis of the field extension
$\overline K_\sigma/\overline{\Q}(t)$ having group $G$ and conjugacy class triple $C_\sigma$ (cf.\ \cite{Mal94}) 
via \texttt{symmStd}, $\texttt{modStd}^*$, and $\texttt{syModStd}^*$. Here, $C_\sigma$ is a class of $G$ 
containing elements of the given cycle type. The symbol ``-'' indicates out of memory failures.} 
\label{tabFieldExtension}
\end{table}
\end{small}

Note that all ideals belonging to the examples listed in Table \ref{tabFieldExtension} are zero-dimensio\-nal 
such that we can compute a Gr\"obner basis with respect to the degree reverse lexicographical ordering, and 
obtain a lexicographical Gr\"obner basis by applying the FGLM-algorithm (cf.\ \cite{FGLM93}) subsequently.
\end{exmp}

\section{Conclusion}

In all considered examples the symmetric (or equivalently the probabilistic symmetric modular) version 
of the Gr\"obner basis algorithm is the most performant one, and is, hence, a quite powerful tool if the 
input ideal is symmetric with respect to some permutation of the ring variables.

Although plenty of adaptive ideals are even symmetric under a whole permutation group the symmetry 
based approach presented in this article is designed for only a single permutation respectively cyclic 
subgroup of $\sym_n$. 
As already mentioned in Remark \ref{remSymmStd} and at the end of Example \ref{exmpCyclic} there 
is a particular conflict with respect to performance in the symmetric Gr\"obner basis algorithm between 
the Gr\"obner basis computations and the application of the linear transformation. 
Hence, a reasonable heuristic is to choose the applicable permutation $\sigma$ of cycle type $(l_1, \ldots,
l_{\vartheta(\sigma)})$ having maximal order $\lcm(l_1, \ldots, l_{\vartheta(\sigma)})$ and minimal 
$l_{\vartheta(\sigma)}$.

\section{Acknowledgement}

The author would like to thank his advisors Gunter Malle and Gerhard Pfister for introducing to the topic
of this article and constant support. In addition, he thanks Michael Cuntz, Christian Eder and Ulrich Thiel 
for helpful discussions. Finally, the author would like to thank Kazuhiro Yokoyama and the anonymous
referees whose comments and suggestions led to great improvement of the paper.


\begin{thebibliography}{999999999}
\bibitem[{A03}]{A03} Arnold, E.\ A.: \emph{Modular algorithms for computing Gr\"obner 
  bases}. Journal of Symbolic Computation 35, 403--419 (2003).
\bibitem[{AH07}]{AH07} Aschenbrenner, M.; Hillar, C.: \emph{Finite generation of symmetric ideals}.
  Transactions of the American Mathematical Society 359, 5171--5192 (2007).
\bibitem[{AH08}]{AH08} Aschenbrenner, M.; Hillar, C.: \emph{An Algorithm for Finding Symmetric 
  Gr\"obner Bases in Infinite Dimensional Rings}. Preprint available at 
  \href{http://arxiv.org/abs/0801.4439}{http://arxiv.org/abs/0801.4439} (2008).
\bibitem[{Bj85}]{Bj85} Bj\"orck, G.: \emph{Functions of modulus on $Z_p$ whose Fourier transforms 
  have constant modulus}. Proc.\ A.\ Haar Memorial Conference, Colloquia Mathematica Societatis 
  J\'anos Bolyai 49, Budapest, 193--197 (1985).
\bibitem[{Bj90}]{Bj90} Bj\"orck, G.: \emph{Functions of modulus on $Z_n$ whose Fourier transforms 
  have constant modulus, and ``cyclic $n$--roots''}. Recent Advances in Fourier Analysis and
  its Applications (J.\ S.\ Byrnes and J.\ F.\ Byrnes, eds.), NATO Advanced Study Institutes Series, Series C, 
  Mathematical and Physical Sciences, Kluwer, 131--140 (1990).
\bibitem[{BF91}]{BF91} Bj{\"o}rck, G.; Fr{\"o}berg, G.: \emph{A Faster Way to Count 
  the Solution of Inhomogeneous Systems of Algebraic Equations, with Applications to Cyclic 
  $n$-Roots}. Journal of Symbolic Computation 12, 329--336 (1991).
\bibitem[{Bu65}]{Bu65} Buchberger, B.: \emph{Ein Algorithmus zum Auffinden der Basiselemente 
  des Restklassenringes nach einem nulldimensionalen Polynomideal}. Dissertation, Innsbruck,
  Austria (1965).
\bibitem[{DGPS12}]{DGPS12} Decker, W.; Greuel, G.-M.; Pfister, G.; Sch{\"o}nemann, H.:
  \emph{\newblock {\sc Singular} {3-1-6} --- {A} computer algebra system for polynomial 
  computations}. \newblock {http://www.singular.uni-kl.de} (2012).
\bibitem[{FGLM93}]{FGLM93} Faug\`ere, J.-C.; Gianni, P.; Lazard, D.; Mora, T.: \emph{Efficient 
  Computation of  Zero--dimensional Gr\"obner Bases by Change of Ordering}. Journal of Symbolic 
  Computation 16, 329--344 (1993).
\bibitem[{FR09}]{FR09} Faug\`ere, J.-C.; Rahmany, S.: \emph{Solving systems of polynomial equations
  with symmetries using SAGBI-Gr\"obner bases}. ISSAC '09: Proceedings of the 2009 International 
  Symposium on Symbolic and Algebraic Computation, New York, NY, USA, 151--158 (2009). 
\bibitem[{GP07}]{GP07} Greuel, G.-M.; Pfister, G.: \emph{A \textsc{Singular} Introduction to 
  Commutative Algebra}. Second edition, Springer (2007).
\bibitem[{IPS11}]{IPS11} Idrees, N.; Pfister, G.; Steidel, S.: \emph{Parallelization of Modular Algorithms}.
  Journal of Symbolic Computation 46, 672--684 (2011).
\bibitem[{Mal94}]{Mal94} Malle, G.: \emph{Fields of definition of some three point ramified field extensions}.
  In: The Grothendieck theory of dessins d'enfants. London Mathematical Society Lecture
  Notes 200, Cambridge University Press, Cambridge, 147--168 (1994).
\bibitem[{MM99}]{MM99} Malle, G.; Matzat, B. H.: \emph{Inverse Galois Theory}. Springer (1999).
\bibitem[{Mat87}]{Mat87} Matzat, B. H.: \emph{Konstruktive Galoistheorie}. Springer Lecture Notes in 
  Mathematics 1284, (1987).
\bibitem[{NY12}]{NY12} Noro, M.; Yokoyama, K.: \emph{Usage of Modular Techniques for Efficient
  Computation of Ideal Operation}. In preparation (2012).
\bibitem[{PS05}]{PS05} Pachter, L.; Sturmfels, B.: \emph{Algebraic Statistics for Computational Biology}.
  Cambridge University Press, Cambridge (2005).
\bibitem[{Se96}]{Se96} Serre, J.-P.: \emph{Linear Representations of Finite Groups}. Graduate Texts in 
  Mathematics 42, Springer (1996).
\bibitem[{St08}]{St08} Sturmfels, B.: \emph{Open problems in Algebraic Statistics}. In: Putinar, M.; 
  Sullivant, S. (eds.): Emerging Applications of Algebraic Geometry, I.M.A.\ Volumes in Mathematics 
  and its Applications 149, Springer, 351--364 (2008).
\bibitem[{Y12}]{Y12} Yokoyama, K.: \emph{Usage of Modular Techniques for Efficient Computation of Ideal 
  Operations (Invited Talk)}. In: Gerdt, V. P.; Koepf, W.; Mayr, E. W.; Vorozhtsov, E. V. (eds.): Computer 
  Algebra in Scientific Computing, Lecture Notes in Computer Science 7442, Springer, 361--362 (2012).
\bibitem[{ZJG11}]{ZJG11} Zhu, M.; Jiang, G.; Gao, S.: \emph{Solving the 100 Swiss Francs Problem}. 
  Mathematics in Computer Science 5, 195--207 (2011).
\end{thebibliography}
\end{document}